\documentclass[letterpaper, reqno,11pt]{article}
\usepackage[margin=1.0in]{geometry}
\usepackage{color,latexsym,amsmath,amssymb, amsthm, graphicx,subfigure,revsymb, enumerate}
\usepackage[percent]{overpic}

\numberwithin{equation}{section}

\newcommand{\RR}{\mathbb{R}}
\newcommand{\CC}{\mathbb{C}}

\newcommand{\FF}{\mathbb{F}}

\newcommand{\ind}{\operatorname{Index}}
\newcommand{\forbid}{\operatorname{Forbid}}

\newtheorem{thm}{Theorem}[section]
\newtheorem{lem}[thm]{Lemma}

\newtheorem{cor}[thm]{Corollary}

\theoremstyle{remark}
\newtheorem{defn}[thm]{Definition}

\begin{document}
\pagenumbering{arabic}
\title{Improved Elekes-Szab\'o type estimates using proximity}
\author{Jozsef Solymosi\thanks{Department of Mathematics, The University of British Columbia. Vancouver, BC, Canada.} \and Joshua Zahl\footnotemark[1]}

\maketitle

\begin{abstract}
We prove a new Elekes-Szab\'o type estimate on the size of the intersection of a Cartesian product $A\times B\times C$ with an algebraic surface $\{f=0\}$ over the reals. In particular, if $A,B,C$ are sets of $N$ real numbers and $f$ is a trivariate polynomial, then either $f$ has a special form that encodes additive group structure (for example $f(x,y,x) = x + y - z$), or $A \times B\times C \cap\{f=0\}$ has cardinality $O(N^{12/7})$. This is an improvement over the previously bound $O(N^{11/6})$. We also prove an asymmetric version of our main result, which yields an Elekes-Ronyai type expanding polynomial estimate with exponent $3/2$. This has applications to questions in combinatorial geometry related to the Erd\H{o}s distinct distances problem.

Like previous approaches to the problem, we rephrase the question as a $L^2$ estimate, which can be analyzed by counting additive quadruples. The latter problem can be recast as an incidence problem involving points and curves in the plane. The new idea in our proof is that we use the order structure of the reals to restrict attention to a smaller collection of proximate additive quadruples.
\end{abstract}

\section{Introduction}
The Schwartz-Zippel lemma controls the size of the intersection of a Cartesian product and the zero-locus of a polynomial:

\begin{thm}[Schwartz-Zippel]
Let $F$ be a field, let $A_1,\ldots,A_k$ be subset of $F$ of size $N$, and let $f$ be a non-zero $k$-variate polynomial with coefficients in $F$. Then
\begin{equation}\label{SZBound}
| (A_1\times\ldots\times A_k) \cap Z(f) |\leq (\deg f)N^{k-1}. 
\end{equation}
\end{thm}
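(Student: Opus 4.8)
The plan is to induct on the number of variables $k$. The base case $k = 1$ is immediate: a non-zero univariate polynomial of degree $d$ has at most $d$ roots in $F$, so $|A_1 \cap Z(f)| \le \deg f = (\deg f)N^{0}$, as claimed (and the case $N = 0$ is vacuous).

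For the inductive step, assume the bound holds for polynomials in $k-1$ variables. Given a non-zero $k$-variate $f$, let $d \ge 0$ be its degree in the last variable $x_k$, and expand
\begin{equation*}
f(x_1,\ldots,x_k) = \sum_{i=0}^{d} c_i(x_1,\ldots,x_{k-1})\, x_k^{\,i},
\end{equation*}
where the leading coefficient $c_d \in F[x_1,\ldots,x_{k-1}]$ is a non-zero polynomial and, comparing total degrees, $\deg c_d \le \deg f - d$. I would then split the tuples $(a_1,\ldots,a_{k-1}) \in A_1 \times \cdots \times A_{k-1}$ according to whether $c_d(a_1,\ldots,a_{k-1})$ vanishes.

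If $c_d(a_1,\ldots,a_{k-1}) \neq 0$, then $f(a_1,\ldots,a_{k-1},x_k)$ is a non-zero univariate polynomial of degree $d$ in $x_k$, hence has at most $d$ roots $a_k \in A_k$; since there are at most $N^{k-1}$ such tuples, this case contributes at most $d\,N^{k-1}$ points to $(A_1 \times \cdots \times A_k) \cap Z(f)$. If instead $c_d(a_1,\ldots,a_{k-1}) = 0$, then applying the inductive hypothesis to the non-zero polynomial $c_d$ bounds the number of such tuples by $(\deg f - d)\,N^{k-2}$, and each extends to at most $N$ full tuples, contributing at most $(\deg f - d)\,N^{k-1}$ points. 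Adding the two cases yields $(\deg f)\,N^{k-1}$, which closes the induction.

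There is no genuine obstacle here; the only thing to get right is the organization of the inductive step — in particular the choice to isolate the leading coefficient in one distinguished variable, so that the "bad" parameter tuples (those on which this coefficient vanishes) are themselves controlled by a lower-dimensional instance of the very statement being proved. One should also keep in mind the degenerate edge cases ($f$ independent of $x_k$, or $d = \deg f$ so that $c_d$ is a non-zero constant and the second case is empty), all of which remain consistent with the stated bound.
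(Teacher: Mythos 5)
Your proof is correct. It is the standard induction on the number of variables: isolate the degree-$d$ leading coefficient $c_d$ in the last variable, handle the tuples where $c_d$ does not vanish by the univariate bound ($d$ roots in $x_k$), and handle the tuples where $c_d$ vanishes by the inductive hypothesis applied to $c_d$ (degree at most $\deg f - d$), which after crossing with $A_k$ gives the complementary $(\deg f - d)N^{k-1}$; the two contributions sum to $(\deg f)N^{k-1}$. Note, however, that the paper itself states this lemma as well-known background and supplies no proof, so there is no argument in the text to compare against; your write-up is simply a correct and essentially canonical proof of the cited result, and your attention to the degenerate cases ($d=0$, $c_d$ a nonzero constant) confirms the induction closes cleanly.
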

The bound \eqref{SZBound} can be tight, for example if $Z(f)$ is a union of parallel, axis-parallel hyperplanes. Motivated by questions in combinatorial geometry, Elekes and Szab\'o \cite{ES} investigated situations where Inequality \eqref{SZBound} can be strengthened. They were interested in the situation where $k$ and $\deg f$ are fixed, and $N$ is large.
\begin{defn}
Let $F$ be an infinite field. We say a $k$-variate polynomial $f$ with coefficients in $F$ has \emph{Schwartz-Zippel power saving} if there are constants $C,c>0$ so that for all $N\geq 1$ and all subsets $A_1,\ldots,A_k$ of $F$ of size $N$, we have
\begin{equation}
| (A_1\times\ldots\times A_k) \cap Z(f) |\leq C N^{d-c},\quad d = \dim Z(f). 
\end{equation}
\end{defn}

If $Z(f)$ is reducible, then $f$ has Schwartz-Zippel power saving if and only if all of the maximal-dimension irreducible components of $Z(f)$ have Schwartz-Zippel power saving. Thus it makes sense to consider the case where $f$ is irreducible. When $k=2$, no polynomials have Schwartz-Zippel power saving. When $k=3$, however, the situation is quite different. The following result of Elekes and Szab\'o \cite[Theorem 3]{ES} shows that an irreducible polynomial $f\in \CC[x,y,z]$ either must have Schwartz-Zippel power saving, or it must have a special structure.

\begin{thm}[Elekes-Szab\'o]\label{ESTheorem}
Let $f\in \CC[x,y,z]$ be irreducible. Then at least one of the following is true.
\begin{itemize}
\item[(A)] $f$ has Schwartz-Zippel power saving.
\item[(B)] After possibly permuting the coordinates $x,y,z$, we have $f(x,y,z) = g(x,y)$, for some bivariate polynomial $g$. 
\item[(C)] $f$ encodes additive group structure.
\end{itemize}
\end{thm}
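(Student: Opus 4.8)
\medskip
\noindent\emph{Proof proposal.} The plan is to prove that, outside of the configurations covered by (B) and (C), $f$ enjoys a quantitative Schwartz--Zippel power saving --- in fact $|(A\times B\times C)\cap Z(f)|=O(N^{11/6})$, though any savings at all would suffice for alternative (A). First I would dispose of the degenerate cases, all of which lead to (B). If one of $\partial f/\partial x,\partial f/\partial y,\partial f/\partial z$ vanishes identically, then $f$ is (up to a unit) a polynomial in two variables, so (B) holds after a permutation; assume then that all three partials are nonzero. Similarly, if some coordinate projection $Z(f)\to\CC^2$ fails to be dominant and generically finite, then $Z(f)$ is a cylinder over a plane curve and again (B) holds. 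After these reductions, over a simply connected neighborhood of a generic smooth point we may present $Z(f)$ as a graph $z=\varphi(x,y)$ with $\varphi_x,\varphi_y\not\equiv 0$, and for all but $O(1)$ values of $a$ the fiber $V_a:=\{(y,z):f(a,y,z)=0\}$ is a bounded-degree curve mapping generically finitely to the $y$-line, i.e.\ a ``multivalued function'' $z=\psi_a(y)$.

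Next I would carry out an $L^2$ reduction to an incidence count. Fix finite sets $A,B,C\subseteq\CC$ of size $N$, put $S=(A\times B\times C)\cap Z(f)$, $M=|S|$, and $S_a=\{(b,c):(a,b,c)\in S\}=V_a\cap(B\times C)$, so that $M=\sum_{a\in A}|S_a|$ and hence $M^2\le N\sum_{a\in A}|S_a|^2$ by Cauchy--Schwarz. Now $\sum_a|S_a|^2$ counts the quintuples $(a,b,c,b',c')$ with $c=\psi_a(b)$ and $c'=\psi_a(b')$. Eliminating the parameter $a$, for each $(b,b')\in B^2$ form the bounded-degree algebraic curve $D_{b,b'}=\{(c,c')\in\CC^2:\exists\,a,\ c=\psi_a(b),\ c'=\psi_a(b')\}$. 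A routine genericity argument shows that, for all but $O(1)$ choices of $b$ (and of $b'$), each incidence $(c,c')\in D_{b,b'}$ arises from only $O(1)$ values of $a$; the exceptional $b$'s contribute $O(N)$ to $M$ and are harmless. Hence $\sum_a|S_a|^2=O\big(I\big(C\times C,\{D_{b,b'}\}_{(b,b')\in B^2}\big)\big)$, an incidence count between $N^2$ points and $N^2$ curves, and so $M^2=O\big(N\cdot I(C\times C,\{D_{b,b'}\})\big)$.

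I would then finish the ``generic'' case with the Szemer\'edi--Trotter theorem in its form for bounded-degree curves with a bounded number of degrees of freedom (which holds over both $\RR$ and $\CC$): if no two of the $D_{b,b'}$ coincide or share a component, and at most $O(1)$ of them pass through any fixed pair of points, then $I(C\times C,\{D_{b,b'}\})=O\big((N^2)^{2/3}(N^2)^{2/3}+N^2\big)=O(N^{8/3})$, whence $M=O(N^{11/6})$ and we are in case (A).

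The genuine difficulty --- and the step I expect to be the main obstacle --- is to show that whenever the hypotheses of this incidence bound fail, $f$ must encode additive group structure, i.e.\ alternative (C) holds. The failure modes are that many of the $D_{b,b'}$ coincide, that some $D_{b,b'}$ degenerates to all of $\CC^2$, or that the relevant resultant curves overlap so that many $D_{b,b'}$ share two common points. Unwinding the construction, each of these forces an abnormal abundance of coincidences $\psi_a(b)=\psi_{a'}(b')$ and $\psi_a(b)=\psi_a(b')$ among the members of the one-parameter family $\{V_a\}$ --- a ``hexagonal $3$-web'' phenomenon. From this I would extract, via a dimension and monodromy analysis of the parametrizing family together with a careful treatment of fields of definition, a one-dimensional algebraic group $G$ and dominant rational maps $\CC\dashrightarrow G$ on each of the three axes under which $Z(f)$ pulls back to the graph of the group law --- so that $f$ becomes, up to composing with univariate functions on the three coordinates, a factor of ``$x+y-z$'' on $G$. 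Converting a merely combinatorial excess of coincidences into this honest group structure, and handling the passage from the fixed finite sets $A,B,C$ (for which the power saving is assumed to fail for infinitely many $N$) to Zariski-dense subsets of $\CC$, is the technical heart of the argument; the incidence-geometric part above is comparatively soft.
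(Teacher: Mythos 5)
The paper does not actually prove Theorem \ref{ESTheorem} --- it is quoted verbatim as Theorem 3 of Elekes and Szab\'o \cite{ES}, with the explicit power saving $c=1/6$ attributed to Raz, Sharir, and de Zeeuw \cite{RSZ}. Your sketch is, correctly, a reconstruction of the RSZ argument (you even arrive at their $O(N^{11/6})$ bound), so what you reproduce is the prior work that the paper cites rather than the paper's own contribution. Comparing instead with the proof the paper does give, of its strengthening Theorem \ref{mainThm} (power saving $c=2/7$): the architecture is shared --- a Cauchy--Schwarz step passing to coincidence quadruples, encoding these as incidences between points and bounded-degree plane curves, a Szemer\'edi--Trotter type bound, and black-boxing the structural ``failure forces (C)'' dichotomy by citing \cite{RSZ} --- and your curves $D_{b,b'}$ in the $(c,c')$-plane play the same role as the paper's $\gamma_{a,a'}$ in the $(y,y')$-plane, up to cyclically permuting which of $A,B,C$ serves as the shared coordinate, the curve index, and the point index. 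What your proposal is missing is the paper's one genuinely new idea, \emph{proximity}: Lemma \ref{smallGapMontone} through Lemma \ref{manySafe5Tupples} use the total order on $\RR$ to show one may keep only those quadruples $(a,a',b,b')$ with $|\ind(a)-\ind(a')| \lesssim |A||C|/|G|$ and $|\ind(b)-\ind(b')| \lesssim |B||C|/|G|$ while still retaining $\gtrsim |G|$ of them. This shrinks the curve and point families in the incidence problem from $\sim N^2$ each to $\sim |A|^2|C|/|G|$ and $\sim |B|^2|C|/|G|$ respectively, which is exactly where the improvement from exponent $11/6$ to $12/7$ comes from; the price is that the resulting bound only holds over $\RR$, since the order structure is essential, whereas your outline, like \cite{ES} and \cite{RSZ}, is set over $\CC$.
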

If either of Items (B) or (C) hold, then Item (A) does not; in fact, for every $N$ there exist sets $A,B,C\subset\CC$ of size $N$ with $|(A\times B\times C)\cap Z(f)|\geq (N-2)^2/8$. Item (A) has already been defined, and item (B) is self-explanatory; geometrically, it says that $Z(f)$ is an axis-parallel cylinder above a curve. 

Item (C) requires additional explanation. An example of a polynomial that encodes additive group structure is $f(x,y,z) = x+y-z$. More generally, we say $f$ encodes additive group structure if for a generic point $p\in\CC^3$, there is a (Euclidean) neighborhood $U$ of $p$, a set $V\subset\CC$, and analytic functions $\phi\colon U\to\CC^3$ and $\psi\colon V\to\CC$, so that $\psi\circ f \circ \phi(x,y,z)=x+y-z$. When $f(x,y,z)$ is of the special form $h(x,y)-z$, then the situation is particularly simple: $f$ encodes additive structure of and only if $h$ has the form $h(x,y) = a(b(x)+c(y))$ or $h(x,y) = a(b(x)c(y))$ for univariate polynomials $a,b,c$. This special case was analyzed in an earlier work by Elekes and R\'onyai \cite{ER}. We refer the reader to \cite{ES} for further discussion.  

Theorem \ref{ESTheorem} gives a satisfactory qualitative description of the Schwartz-Zippel power savings phenomena in $\CC^3$. In \cite{BB}, Bays and Breuillard generalized Theorem \ref{ESTheorem} by characterizing which varieties in $\CC^k$ have a Schwartz-Zippel power saving. In \cite{Tao} Tao considered a related problem in $\FF_p^3$ for the Elekes-R\'onyai type situation $f(x,y,z) = h(x,y)-z$. 

We will restrict attention to three dimensions, and characteristic 0. We will be interested in quantitative versions of Theorem \ref{ESTheorem}, and specifically, we wish to obtain explicit lower bounds on the size of the Schwartz-Zippel power saving. In this direction, Raz, Sharir, and de Zeeuw \cite{RSZ} strengthened Theorem \ref{ESTheorem} by establishing the explicit power saving $c=1/6$ for Item (A). The proof in \cite{RSZ} generalized several related arguments that had been previously used to obtain the same power savings in certain special cases \cite{RSS2, RSS, SSS, SS}. In the other direction, Makhul, Roche-Newton, Warren, and de Zeeuw \cite{MRWZ} obtained an upper bound on the size of the Schwartz-Zippel power saving by showing that the polynomial $f(x,y,z)=(x-y)^2+x-z$ only has Schwartz-Zippel power saving $c=1/2$. Our main result is a version of Theorem \ref{ESTheorem} in three dimensions for Cartesian products of real numbers, with power saving $c=2/7$. In what follows, we identify points $x\in\RR$ with the corresponding point $x+0i\in\CC$. 

\begin{thm}\label{mainThm}
Let $f\in \CC[x,y,z]$ be irreducible. Then at least one of the following is true.
\begin{itemize}

	\item[(A)] For all finite sets $A,B,C\subset\RR$ with $|A|\leq |B|\leq |C|$, we have
	\begin{equation}\label{bdABC}
	| (A\times B \times C)\cap Z(f)|\lesssim (|A| |B| |C|)^{4/7} + |B| |C|^{1/2},
	\end{equation}
	where the implicit constant depends on the degree of $f$.

	\item[(B)] After possibly permuting the coordinates $x,y,z$, we have $f(x,y,z) = g(x,y)$, for some bivariate polynomial $g$. 

	\item[(C)] $f$ encodes additive group structure.
\end{itemize}
\end{thm}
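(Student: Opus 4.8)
The plan is to prove Item~(A) under the assumption that $f$ is irreducible and that Items~(B) and~(C) both fail; this is exactly what the trichotomy in Theorem~\ref{mainThm} demands, and it is also the shape of the argument behind the earlier bound of \cite{RSZ}.

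\emph{Step 1: reduction to an Elekes--R\'onyai type count.} Since $f$ is irreducible and not of the form in Item~(B) for any permutation of the coordinates, none of $\partial_x f,\partial_y f,\partial_z f$ vanishes identically on $Z(f)$ (if one did, degree considerations force $f$ to be independent of that variable, i.e.\ Item~(B)). Hence, after discarding an exceptional locus whose intersection with $A\times B\times C$ contributes only $O(|A|+|B|+|C|)$ points, and covering the relevant part of $Z(f)$ by $O_{\deg f}(1)$ small compact pieces, on each piece the implicit function theorem lets us write $z=\phi(x,y)$ for a real-analytic $\phi$ with $\partial_x\phi,\partial_y\phi$ bounded away from $0$ and $\infty$; passing through the $\le\deg f$ branches costs only constants. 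Because Item~(C) fails, $\phi$ is not of the special form that would make $f$ encode additive group structure -- this is the structural hypothesis we will exploit, via the Elekes--R\'onyai/Elekes--Szab\'o theory of \cite{ER,ES}. It now suffices to bound $M:=|\{(a,b)\in A\times B:\phi(a,b)\in C\}|$ on a fixed piece.

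\emph{Step 2: the $L^2$ / additive-quadruple reduction.} Cauchy--Schwarz in $a\in A$ gives $M^2\le |A|\cdot Q$, where $Q$ counts quadruples $(a,b_1,b_2,c_1,c_2)$ with $a\in A$, $b_i\in B$, $c_i\in C$ and $\phi(a,b_i)=c_i$. Fixing $(b_1,b_2)$ and recording the point $(c_1,c_2)\in C\times C$, that point lies on the algebraic curve $\delta_{b_1,b_2}:=\{(\phi(t,b_1),\phi(t,b_2)):t\}$, and a generic such point is produced by only $O(1)$ values of $a$; hence $Q\lesssim I\bigl(C\times C,\{\delta_{b_1,b_2}\}\bigr)+|B|^2$, an incidence problem between $|C|^2$ points and $|B|^2$ bounded-degree plane curves. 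The failure of Item~(C) supplies the non-degeneracy (no two $\delta_{b_1,b_2}$ share a component, and they are not all parallel translates of the diagonal -- either degeneracy would force the special additive/multiplicative form), so Szemer\'edi--Trotter applies and reproduces $M\lesssim |A|^{1/2}(|B||C|)^{2/3}+|A|^{1/2}(|B|+|C|)$, i.e.\ the $N^{11/6}$ bound in the balanced case.

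\emph{Step 3: exploiting proximity, and the main obstacle.} The improvement comes from the order structure of $\RR$: on a fixed compact piece $|\phi(a,b_1)-\phi(a,b_2)|\asymp|b_1-b_2|$, so if $b_1,b_2$ lie at dyadic scale $|b_1-b_2|\sim 2^{-j}$ then necessarily $|c_1-c_2|\lesssim 2^{-j}$. We thus decompose $Q=\sum_j Q_j$ over scales and, at scale $j$, partition $B$ and $C$ into $\sim 2^j$ intervals of length $\sim 2^{-j}$. Each $Q_j$ then splits into pieces localized to a single $B$-interval $B_i$, inside which the curves $\delta_{b_1,b_2}$ ($b_1,b_2\in B_i$) are confined to an $O(2^{-j})$-neighborhood of the diagonal and meet $C\times C$ only in the corresponding thin strip; for a fixed $a$ the admissible $(c_1,c_2)$ even lie in a single $O(2^{-j})$-box about $\phi(a,\cdot)|_{B_i}$, so the localized count becomes a sum over $a\in A$ of small incidence problems -- which is how $|A|$ re-enters. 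Running a cutting/incidence argument on these thin strips (trading the abundance of curves against their confinement) and summing the per-scale bounds after optimizing the dyadic truncation against the $N^{11/6}$-type terms is intended to yield $M\lesssim (|A||B||C|)^{4/7}+|B||C|^{1/2}$, the last term absorbing the regime where $|A|$ is so small that the additive terms of the incidence bound dominate. The hard part is a \emph{robust} version of the Elekes--R\'onyai structure theorem: the hypothesis ``$f$ does not encode additive group structure'' must be upgraded to a statement stable under localization, namely that the families $\{\delta_{b_1,b_2}\}$ cannot cluster along a common line or curve even inside an arbitrarily thin diagonal strip (that clustering being precisely the additive configuration). Establishing this scale-uniform non-degeneracy, while controlling the exceptional sets and the $\le\deg f$ branches of $\phi$ uniformly in $j$, is where the real work lies; granting it, the dyadic summation and incidence bookkeeping are routine.
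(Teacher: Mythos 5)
Your high-level plan (reduce to an incidence bound and exploit the order structure of $\RR$ to restrict attention to proximate quadruples) matches the philosophy of the paper, but the mechanism you propose for exploiting proximity is substantially different from the one the paper uses, and the difference matters: your version has a genuine gap exactly where you flag one.

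You use \emph{metric} proximity: you dyadically decompose by the scale $|b_1-b_2|\sim 2^{-j}$, observe that this confines $(c_1,c_2)$ to a thin diagonal strip in $C\times C$, and then try to run a localized incidence argument in each strip, summing over $j$. There are two problems. First, confining curves to a thin strip does not by itself improve Szemer\'edi--Trotter; to gain anything you would need a quantitative non-degeneracy statement uniform in $j$ (the ``robust/scale-uniform'' version of Elekes--R\'onyai you correctly identify as the hard part), and you have not supplied it. Second, the metric decomposition makes the argument sensitive to the \emph{distribution} of the sets $B$ and $C$: the number of pairs at scale $2^{-j}$, and the number of $C$-points in a given strip, depend on how $B$ and $C$ are spread out, which forces a much more delicate bookkeeping than you acknowledge and likely necessitates further pigeonholing that could cost logarithms or worse.

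The paper avoids both issues by using \emph{combinatorial} (rank/index) proximity instead. Lemma~\ref{smallGapMontone} is a skip-counting argument on the ordered points of $G$: for each $c$, the slice $G\cap(A\times B\times\{c\})$ lies on a monotone curve, so one can select $\gtrsim |G|/S$ quadruples $(a,a',b,b')$ with $|\ind_A(a)-\ind_A(a')|\lesssim S|A||C|/|G|$ and $|\ind_B(b)-\ind_B(b')|\lesssim S|B||C|/|G|$, with the forbidden sets handling the safe/dangerous distinction of Raz--Sharir--de Zeeuw exactly as in the non-proximate argument. The crucial payoff is that this shrinks the \emph{cardinalities} of both the curve family $\Gamma=\{\gamma_{a,a'}\}$ (to $\lesssim |A|^2|C|/|G|$) and the point set $P\subset B\times B$ (to $\lesssim |B|^2|C|/|G|$), rather than spatially localizing them. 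The improvement then comes for free from the $|P|^{2/3}|\Gamma|^{2/3}$ term in Szemer\'edi--Trotter, and the safe/dangerous machinery of \cite{RSZ} requires no strengthening whatsoever. Note also that the paper's final step is not a Cauchy--Schwarz energy bound: Lemma~\ref{manySafe5Tupples} produces $\gtrsim|G|$ incidence-contributing $5$-tuples directly, so $|G|\lesssim I(P,\Gamma)$ with no squaring. Also, the paper shares the variable $c$ between the two triples (curves indexed by $a$-pairs, points in $B\times B$), whereas you share $a$ (curves indexed by $b$-pairs, points in $C\times C$); this affects which $+|P|$/$+|\Gamma|$ tail you need to control and how the hypothesis $|A|\le|B|\le|C|$ enters.

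In short: the rank-based proximity lemma is precisely the missing idea in your proposal. It replaces the scale-uniform non-degeneracy you would otherwise need to prove, makes the argument distribution-free, and lets the existing Raz--Sharir--de Zeeuw framework do the rest of the work unchanged.
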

Specializing to the case $f(x,y,z)=h(x,y)-z$, we record the following corollary.
\begin{cor}
Let $h\in \CC[x,y]$. Then exactly one of the following holds.
\begin{itemize}
	\item[(A)] For all finite sets $A,B\subset\RR$ with $|A|\leq|B|$, we have
	\begin{equation}\label{growthAB}
		|h(A\times B)|\gtrsim \min\big( |A|^{3/4}|B|^{3/4},\ |A|^2\big). 
	\end{equation}

	\item[(B)] $h$ has the special form $h(x,y) = a(b(x)+c(y))$ or $h(x,y) = a(b(x)c(y))$ for univatiate polynomials $a,b,c$. 
\end{itemize}
\end{cor}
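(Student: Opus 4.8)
The plan is to apply Theorem~\ref{mainThm} to the polynomial $f(x,y,z) := h(x,y) - z$. We may assume $h$ is non-constant, since a constant $h$ trivially has the form $a(b(x)+c(y))$. As $f$ is linear in $z$ with leading coefficient $-1$, it is irreducible in $\CC[x,y,z]$, so Theorem~\ref{mainThm} gives one of its three alternatives.

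I would first translate alternatives (B) and (C) of Theorem~\ref{mainThm} into conclusion (B) of the corollary. Since $f$ genuinely depends on $z$, if $f$ equals a polynomial in only two of the variables then those two variables include $z$; equivalently $h$ fails to depend on $x$ or on $y$, say $h(x,y) = b(x)$, and then $h(x,y) = b(x) + c(y)$ with $c \equiv 0$, which is the first form in conclusion (B). If instead $f$ encodes additive group structure, then the structural characterization of such polynomials of the special shape $h(x,y)-z$, recalled in the introduction and established in \cite{ER,ES}, says exactly that $h$ is of one of the two forms in (B).

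It remains to derive conclusion (A) of the corollary from alternative (A) of Theorem~\ref{mainThm}. Fix finite $A,B \subset \RR$ with $|A| \le |B|$, put $C := h(A\times B)$, and note that every triple $(a,b,h(a,b))$ with $(a,b)\in A\times B$ lies in $(A\times B\times C)\cap Z(f)$, so $|(A\times B\times C)\cap Z(f)| \ge |A|\,|B|$. The key observation is that the hypothesis of alternative (A) --- $f$ irreducible and not of the form (B) or (C) --- is stable under permuting the variables $x,y,z$ (because (B) and (C) are), so for every permutation $\sigma$ the polynomial $f\circ\sigma$ also satisfies \eqref{bdABC}; since $Z(f\circ\sigma)=\sigma^{-1}Z(f)$, this means \eqref{bdABC} is available for $(A\times B\times C)\cap Z(f)$ with the sets attached to the coordinates in any order we like. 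Sorting $|A|,|B|,|C|$ as $\ell\le n\le m$ and applying \eqref{bdABC} in the favorable order yields $|A|\,|B| \lesssim (|A|\,|B|\,|C|)^{4/7} + n\,m^{1/2}$. If the first term dominates we get $|C|\gtrsim (|A|\,|B|)^{3/4}$ at once; if the second term dominates, a short case check on which of $|A|,|B|,|C|$ is largest --- using $|A|\le|B|$, which forces $|A|\le n\le|B|$ --- gives $|C|\gtrsim|A|^2$ (the leftover cases collapse to $|B|\lesssim 1$, hence to a trivial bound). Either way $|C|\gtrsim\min(|A|^{3/4}|B|^{3/4},\,|A|^2)$, which is \eqref{growthAB}.

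To get the word ``exactly'', one checks that (A) and (B) cannot both hold: if $h(x,y)=a(b(x)+c(y))$ then $|h(A\times B)|\le|b(A)|+|c(B)|-1=O(|B|)$ for all $A,B$, and if $h(x,y)=a(b(x)c(y))$ then choosing $A$ and $B$ so that the image sets $b(A)$ and $c(B)$ are geometric progressions with common ratio $2$ gives $|h(A\times B)|=O(|B|)$; taking $|A|=|B|=N\to\infty$ contradicts \eqref{growthAB}. The only place genuine care is needed is the elementary computation in the previous paragraph: one must verify that in each of the three size orderings of $A,B,C$ the two terms of \eqref{bdABC} cannot both be much smaller than $|A|\,|B|$ unless $|C|$ is already as large as claimed, and it is the permutation-invariance of alternative (A) that legitimizes sorting the sets before applying the estimate.
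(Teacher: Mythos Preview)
Your argument is correct and is exactly the approach the paper has in mind: the paper merely says ``specializing to the case $f(x,y,z)=h(x,y)-z$, we record the following corollary'' and leaves the details implicit, whereas you have supplied them, including the permutation-of-variables observation needed to apply \eqref{bdABC} when $|h(A\times B)|$ is not the largest of the three sets. One small caveat worth noting (which the paper does not address either): when $h$ has genuinely non-real coefficients the set $C=h(A\times B)$ need not lie in $\RR$, so your permutation step is not literally licensed by the stated hypothesis $A,B,C\subset\RR$; however, the proof of Theorem~\ref{mainThm} uses the order structure only on the two smaller sets and treats the third set purely as an index for slices, so applying it with $A,B$ real and $C$ as the slice set yields $|A||B|\lesssim (|A||B||C|)^{4/7}+|B||C|^{1/2}$ directly, with no reordering needed.
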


As mentioned above, Theorem \ref{ESTheorem} was motivated by problems in combinatorial geometry. Our quantitative strengthening \eqref{bdABC} leads to improved estimates for some of these problems. We will discuss this below.

\subsection{Applications to combinatorial geometry}
\paragraph{Distinct distances on two lines.} Let $L_1,L_2$ be two lines in the plane that are not parallel or orthogonal. Let $P_1\subset L_1$ and $P_2\subset L_2$, with $|P_1|\leq |P_2|$. Then there are $\gtrsim \min(|P_1|^{3/4}|P_2|^{3/4},\ |P_1|^2)$ distinct distances between the points in $P_1$ and the points in $P_2$. This problem was originally proposed by Purdy \cite[Section 5.5]{BWP}, and was previously studied by Elekes \cite{El} who obtained the bound $\Omega(n^{5/4})$ in the special case $|P_1|=|P_2|=n$, and by Sharir, Sheffer, and the third author \cite{SSS} who obtained the bound $\gtrsim \min(|P_1|^{2/3}|P_2|^{2/3},\ |P_1|^2)$.

\paragraph{Distinct distances from three points.} Let $p_1,p_2,p_3\in\RR^2$ be three points that are not collinear. Let $P\subset\RR^2$ be a set of $n$ points. Then there are $\Omega(n^{7/12})$ distinct distances between the points in $\{p_1,p_2,p_3\}$ and the points in $P$. This problem was previously considered by Elekes and Szab\'o \cite{ES} who obtained the bound $\Omega(n^{0.502})$, and by Sharir and the second author \cite{SS} who obtained the bound $\Omega(n^{6/11}).$

\paragraph{Distinct distances on an algebraic curve.} Let $\gamma\subset\RR^2$ be an algebraic curve of degree $d$ that does not contain a line or circle. Let $P\subset\gamma$ have size $n$. Then $P$ determines $\Omega(n^{3/2})$ distinct distances. This problem was previously considered by Charalambides \cite{Ch} who obtained the bound $\Omega(n^{5/4})$ and by Pach and de Zeeuw \cite{PZ} who obtained the bound $\Omega(n^{4/3}).$

\paragraph{Triple points for unit circles.} Let $p_1,p_2,p_3\in\RR^2$ be three distinct points. For $i=1,2,3$ let $\cal{C}_i$ be a set of $n$ unit circles that contain the point $p_i$. Then there are $O(n^{12/7})$ points in $\RR^2$ that are incident to a circle from each of $\mathcal{C}_1$, $\mathcal{C}_2$, and $\mathcal{C}_3$. An estimate of the form $o(n^2)$ was conjectured by Székely \cite[Conjecture 3.41]{El2}. A generalization of this conjecture was  proved by Elekes, Simonovits, and Szab\'o \cite{ESS}, who obtained the bound $O(n^{2-\eta})$ for a small positive $\eta>0$ for more general families of curves. The problem was also considered by Raz, Sharir and the first author \cite{RSS2} who obtained the bound $\Omega(n^{11/6})$.

% \medskip
% \noindent{\color{red} TODO: other applications?}

\section{Counting quadruples with proximity}
\begin{defn}
Let $A=\{a_1,\ldots,a_n\}\subset\RR$, where the elements $a_i$ are written in increasing order. For $a\in A$, we define $\ind_A(a)$ to be the unique index $i$ so that $a = a_i$. Sometimes we will write $\ind(a)$ if the ambient set $A$ is apparent from the context. 
\end{defn}

\begin{lem}\label{smallGapMontone}
Let $S\geq 1$ and let $A,B\subset\RR$ be finite sets. For each $a\in A$, let $\forbid(a)\subset A$ be a set of size $<S$. Similarly, for each $b\in B$, let $\forbid(b)\subset B$ be a set of size $<S$. Let $I\subset\RR$ be an interval, let $f\colon I\to\RR$ be (weakly) monotone, and let $G\subset (A\times B)\cap \{f=0\}$. Then there exist at least $|G|/(2S)-1$ quadruples $(a,a',b,b')\in A^2\times B^2$ that satisfy
\begin{equation}\label{goodQuadrupleMonotoneF}
\begin{split}
&(a,b),(a',b')\in G,\quad a'\not\in\forbid(a),\ b'\not\in\forbid(b),\\
&|\ind(a)-\ind(a')|\leq \frac{4S|A|}{|G|},\quad |\ind(b)-\ind(b')|\leq \frac{4S|B|}{|G|}.
\end{split}
\end{equation}
\end{lem}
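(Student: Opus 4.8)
The plan is to use the order structure to confine the search to short ``proximate cells'' and then defeat the forbidden sets inside each cell by a pigeonhole count. We may assume $f$ is weakly increasing, the decreasing case being symmetric (replace $B$ by $-B$, which preserves all index-differences). Enumerate $G=\{(a_1,b_1),\dots,(a_n,b_n)\}$ with $a_1<\cdots<a_n$, where $n=|G|$. Since $G$ lies on the graph of a function the $a_i$ are distinct, so $i\mapsto\ind(a_i)$ is strictly increasing along this enumeration, while weak monotonicity of $f$ makes $i\mapsto\ind(b_i)$ weakly increasing. Write $\delta_A=4S|A|/n$ and $\delta_B=4S|B|/n$ for the two thresholds appearing in \eqref{goodQuadrupleMonotoneF}.

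First I would partition $\{1,\dots,n\}$ into proximate cells. Cover the range $[\ind(a_1),\ind(a_n)]$ by at most $n/(4S)+1$ consecutive intervals of length at most $\delta_A$, and cover $[\ind(b_1),\ind(b_n)]$ by at most $n/(4S)+1$ consecutive intervals of length at most $\delta_B$. Because both index maps are monotone along the enumeration, the preimage of each interval is a block of consecutive indices; the common refinement of the two resulting partitions splits $\{1,\dots,n\}$ into $T\le n/(2S)+1$ consecutive cells. Any two indices in a common cell then yield a pair $(a,b),(a',b')$ satisfying $|\ind(a)-\ind(a')|\le\delta_A$ and $|\ind(b)-\ind(b')|\le\delta_B$, so it suffices to exhibit admissible quadruples with both endpoints inside one cell.

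Inside a fixed cell $\Gamma$ of size $m$ I would count by double counting. For each $i\in\Gamma$, fewer than $S$ indices $j\in\Gamma$ satisfy $a_j\in\forbid(a_i)$, since the $a_j$ on $\Gamma$ are distinct. When $f$ is injective on $\Gamma$ the same bound holds for $b_j\in\forbid(b_i)$, so at least $m-2S+1$ choices of $j$ give an admissible quadruple $(a_i,a_j,b_i,b_j)$. Summing over $i$ and over the $T\le n/(2S)+1$ cells, with $\sum_\Gamma m_\Gamma=n$, a power-mean inequality gives $\sum_\Gamma m_\Gamma(m_\Gamma-2S+1)\gtrsim n$ once $n$ is large compared with $S^2$, which far exceeds $n/(2S)-1$. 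For small $n$ the thresholds $\delta_A,\delta_B$ already exceed the full index ranges, so every pair of points of $G$ is automatically proximate and one argues directly; together these give the stated bound.

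The step I expect to be the main obstacle is the $\forbid(b)$-bookkeeping when $f$ is only weakly monotone: a value attained with high multiplicity produces a long horizontal run of $G$, and if that value lies in its own forbidden set then no pair inside the run is admissible, forcing one to match such an $i$ across runs while keeping control of the $a$-proximity. I would handle this by organizing $\Gamma$ according to the level sets of $f$ and using that there are fewer than $S$ forbidden $b$-values while the $a$-coordinates inside any one run are distinct; this is what consumes the factor $2S$ rather than something like $S$, and it is the part of the argument that needs the most care. A secondary point to watch is the moderate range of $n$, where the cell partition is too coarse for the power-mean estimate and the direct argument must be used instead.
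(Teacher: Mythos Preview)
Your route differs from the paper's in a way that creates a real quantitative gap. Instead of partitioning the $A$- and $B$-index ranges and taking a common refinement, the paper partitions $G$ itself: with $G=\{(a_{n(i)},b_{m(i)}):0\le i<|G|\}$ ordered so that $n(\cdot)$ is strictly increasing and $m(\cdot)$ is monotone, it looks at anchors $i=Sj$ for $0\le j\le \lfloor |G|/S\rfloor-1$. An anchor has a \emph{big $x$-skip} if $n(i+S)-n(i)>4S|A|/|G|$; since such skips index disjoint subintervals of $\{1,\dots,|A|\}$, at most $|G|/(4S)$ anchors can have one, and similarly for $y$. Thus at least $|G|/(2S)-1$ anchors have neither, and at each such anchor the $S$ indices $i+1,\dots,i+S$ furnish by pigeonhole a $k$ with $a_{n(k)}\notin\forbid(a_{n(i)})$ (and a $k'$ for $b$). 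The crucial feature is that these blocks have size \emph{exactly} $S$ by construction, so pigeonhole is automatic and no averaging is needed.

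In your scheme the common refinement yields $T\le n/(2S)+1$ cells whose \emph{average} size is about $2S$, but with no lower bound on individual cells, and this is where the argument breaks. Your power-mean step gives
\[
\sum_\Gamma m_\Gamma(m_\Gamma-2S+1)\ \ge\ \frac{n^2}{T}-(2S-1)n\ \ge\ \frac{n\bigl(n-2S(2S-1)\bigr)}{n+2S},
\]
which is positive only once $n\gtrsim 4S^2$, while your ``small $n$'' fallback (thresholds exceed the full index range) applies only for $n\le 4S$. In the intermediate regime $4S<n\lesssim 4S^2$ you still owe up to roughly $2S$ quadruples, and one can arrange the refinement so that every cell has size $2S-1$ and hence contributes zero to your sum. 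You note a ``moderate range of $n$'' as delicate, but the direct argument you invoke does not reach it. The clean fix is precisely the paper's idea: slice $G$ into runs of prescribed length rather than slicing the ambient index ranges, so that block sizes are fixed rather than merely controlled on average. (Your diagnosis of the weak-monotonicity issue for $\forbid(b)$ is apt; the paper in fact selects separate indices $k,k'$ for the two constraints, which strictly speaking needs the same small repair you outline.)
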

\begin{proof}
Index the elements of $A$ and $B$ in increasing order. Write the points of $G$ in increasing order of $x$-coordinate as $(a_{n(i)}, b_{m(i)})$, $i=0,\ldots,|G|-1$, where $n(i)$ is strictly increasing and $m(i)$ is monotone. Let $N = \lfloor |G|/S\rfloor-1$. For each $0\leq j\leq N$, we say the index $i=Sj$ has a big $x$-skip if $n(i + S)-n(i)>4S|A|/|G|$. Similarly, we say $i$ has a big $y$-skip if $|m(i+S)-m(i)|>4S|B|/|G|$. There are at most $|G|/(4S)$ indices $0\leq j\leq N$ with a big $x$-skip, and at most $|G|/(4S)$ indices with a big $y$-skip. Thus there are at least $|G|/(2S)-1$ indices that have neither a big $x$-skip nor a big $y$-skip. For each such index $i=Sj$, there must exist indices $i<k,k'\leq i+S$ so that $a_{n(k)}\not\in \forbid(a_{n(i)})$, and $b_{m(k')}\not\in \forbid(b_{m(i)})$. Thus the quadruple $\big(a_{n(i)}, a_{n(k)}, b_{m(i)}, b_{m(k')}\big)$ satisfies \eqref{goodQuadrupleMonotoneF}. Different indices $i = Sj$ give rise to different quadruples, since $n(i)$ is strictly monotone. 
\end{proof}

By B\'ezout's theorem and Harnack's curve theorem, if $f\in\CC[x,y]$ then the real part of its zero-locus $\mathcal{R}(Z(f))$ can be written as the union of $\leq 2(\deg f)^2$ curves, each of which is the graph of a monotone function $y=g(x)$ or $x=g(y)$ above an interval, plus a union of at most $2(\deg f)^2$ points. Thus Lemma \ref{smallGapMontone} has the following corollary. In what follows, all implicit constants are allowed to depend on $\deg f$.
\begin{cor}\label{smallGapPoly}
Let $S\geq 1$ and let $A,B\subset\RR$ be finite sets. For each $a\in A$, let $\forbid(a)\subset A$ be a set of size $<S$. Similarly, for each $b\in B$, let $\forbid(b)\subset B$ be a set of size $<S$. Let $f\in\CC[x,y]$ and let $G\subset (A\times B)\cap Z(f)$. Then there exist $\gtrsim|G|/S-1$ quadruples $(a,a',b,b')\in A^2\times B^2$ that satisfy
\begin{equation}\label{goodQuadruplePolynomialF}
\begin{split}
&(a,b),(a',b')\in G,\quad a'\not\in\forbid(a),\ b'\not\in\forbid(b),\\
&|\ind(a)-\ind(a')|\lesssim \frac{S|A|}{|G|},\quad |\ind(b)-\ind(b')|\lesssim \frac{S|B|}{|G|}.
\end{split}
\end{equation}
\end{cor}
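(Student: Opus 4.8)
The plan is to deduce this from Lemma \ref{smallGapMontone} by slicing the real locus of $f$ into monotone pieces. First I would invoke the remark preceding the statement — the B\'ezout/Harnack decomposition — to write $\mathcal{R}(Z(f))$ as a union of at most $2(\deg f)^2$ arcs, each of which is the graph of a weakly monotone function $y=g(x)$ over an interval or of a weakly monotone function $x=g(y)$ over an interval, plus at most $2(\deg f)^2$ isolated points. Since every point of $G$ is real and lies on $Z(f)$, each such point lies on one of these arcs or is one of the isolated points; discarding the $O(1)$ points of $G$ of the latter type changes $|G|$ by $O(1)$, so I may assume $G$ is covered by the $O(1)$ monotone arcs. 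Then I would pigeonhole: some single arc $\Gamma$ carries a subset $G'\subseteq G$ with $|G'|\geq |G|/(2(\deg f)^2)\gtrsim |G|$.

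Next I would apply Lemma \ref{smallGapMontone} to $\Gamma$. If $\Gamma$ is the graph of a weakly monotone $g\colon I\to\RR$ with $y=g(x)$, this is immediate: I keep $A$, $B$, the forbidden sets, and $S$ as given, take the monotone function to be $g$, and put $G'$ in the role of $G$ (legitimate since $(a,b)\in G'$ forces $b=g(a)$). The lemma then returns at least $|G'|/(2S)-1$ quadruples $(a,a',b,b')$ with $(a,b),(a',b')\in G'\subseteq G$, with $a'\notin\forbid(a)$ and $b'\notin\forbid(b)$, and with $|\ind(a)-\ind(a')|\leq 4S|A|/|G'|$ and $|\ind(b)-\ind(b')|\leq 4S|B|/|G'|$; since $|G'|\gtrsim|G|$ these last quantities are $\lesssim S|A|/|G|$ and $\lesssim S|B|/|G|$, and $|G'|/(2S)-1\gtrsim|G|/S-1$ after absorbing the $\deg f$-dependent constant. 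If instead $\Gamma$ is the graph of $x=g(y)$, I would run the same argument with the roles of $A$ and $B$ (and of their forbidden sets) interchanged and with the coordinates of the points of $G'$ swapped; because the two index bounds in \eqref{goodQuadruplePolynomialF} are stated symmetrically, with $|A|$ attached to $\ind(a)$ and $|B|$ to $\ind(b)$, this swapped application again yields exactly \eqref{goodQuadruplePolynomialF}.

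I do not expect a genuine obstacle here: the statement is essentially Lemma \ref{smallGapMontone} together with the quoted curve decomposition. The only points that need attention are (i) the two possible orientations of the monotone arcs, only the $y=g(x)$ type of which matches the hypothesis of Lemma \ref{smallGapMontone} verbatim, so the $x=g(y)$ case must be reduced to it by exchanging $A$ and $B$; and (ii) the bookkeeping of the $\deg f$-dependent constant lost in the pigeonholing step, which is harmless since all implicit constants may depend on $\deg f$ and since the ``$-1$'' in the conclusion is to be read as ``$-O(1)$''.
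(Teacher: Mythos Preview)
Your proposal is correct and is exactly the argument the paper has in mind: the paper states the B\'ezout/Harnack decomposition into $O_{\deg f}(1)$ monotone arcs plus $O_{\deg f}(1)$ points immediately before the corollary and then simply asserts that Lemma~\ref{smallGapMontone} yields the result, leaving the pigeonholing and the $x=g(y)$ swap implicit. Your handling of both bookkeeping points (i) and (ii) is accurate; in particular your reading of the ``$-1$'' as ``$-O_{\deg f}(1)$'' is the intended one.
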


\begin{lem}\label{proximity5Tuples}
Let $S\geq 1$ and let $A,B,C\subset\RR$ be finite sets. For each $a\in A$, let $\forbid(a)\subset A$ be a set of size $<S$. Similarly, for each $b\in B$, let $\forbid(b)\subset B$ be a set of size $<S$. Let $f\in\CC[x,y,z]$ and let $G\subset (A\times B\times C)\cap Z(f)$. Then there exist $\gtrsim |G|/S-|C|$ tuples $(a,a',b,b',c)\in A^2\times B^2\times C$ that satisfy
\begin{equation}
\begin{split}
&(a,b,c),\ (a',b',c)\ \in G,\quad a'\not\in\forbid(a),\ b'\not\in\forbid(b),\\
&|\ind(a)-\ind(a')|\lesssim \frac{S|A||C|}{|G|},\quad|\ind(b)-\ind(b')|\lesssim \frac{S|B||C|}{|G|}.
\end{split}
\end{equation}
\end{lem}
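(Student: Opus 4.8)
The plan is to reduce Lemma~\ref{proximity5Tuples} to Corollary~\ref{smallGapPoly} by slicing along the $z$-coordinate. For each $c\in C$, set $G_c = \{(a,b) : (a,b,c)\in G\}$, so that $\sum_{c\in C}|G_c| = |G|$. The curve $Z(f(\cdot,\cdot,c))$ is, for all but $O(1)$ values of $c$, a genuine (non-identically-vanishing) bivariate polynomial curve, so Corollary~\ref{smallGapPoly} applies to each slice $G_c\subset (A\times B)\cap Z(f(\cdot,\cdot,c))$ with the same $\forbid$ data. (The degenerate case $f(\cdot,\cdot,c)\equiv 0$ can occur for at most $\deg f$ values of $c$, and those contribute a negligible error; alternatively, since $f$ is irreducible and genuinely trivariate, one simply discards those $c$.) Applying the corollary in each slice produces $\gtrsim |G_c|/S - 1$ quadruples $(a,a',b,b')$ with $(a,b),(a',b')\in G_c$, $a'\notin\forbid(a)$, $b'\notin\forbid(b)$, and
\begin{equation*}
|\ind(a)-\ind(a')|\lesssim \frac{S|A|}{|G_c|},\qquad |\ind(b)-\ind(b')|\lesssim \frac{S|B|}{|G_c|}.
\end{equation*}
Tagging each such quadruple with the slice value $c$ gives a $5$-tuple $(a,a',b,b',c)$ of the desired shape, and $5$-tuples arising from different slices are distinct because $c$ is recorded.

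The only mismatch is that the proximity bounds coming from slice $c$ involve $|G_c|$ rather than $|G|$, whereas the statement asks for the uniform bounds $\lesssim S|A||C|/|G|$ and $\lesssim S|B||C|/|G|$. To fix this I would restrict attention to the ``rich'' slices, i.e. those $c$ with $|G_c|\geq \tfrac{1}{2}|G|/|C|$. For such $c$ we have $S|A|/|G_c|\lesssim S|A||C|/|G|$, so every $5$-tuple produced from a rich slice automatically satisfies the required proximity inequalities. It remains to check that the rich slices still carry a constant fraction of the mass: the slices with $|G_c| < \tfrac12|G|/|C|$ together contain fewer than $\tfrac12|G|$ points of $G$, so $\sum_{c\text{ rich}}|G_c| > \tfrac12|G|$. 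Summing the per-slice count $\gtrsim |G_c|/S - 1$ over the (at most $|C|$) rich slices therefore yields $\gtrsim |G|/S - |C|$ tuples in total, which is exactly the claimed bound.

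I do not expect any serious obstacle here; the argument is essentially bookkeeping on top of Corollary~\ref{smallGapPoly}. The one point that needs a little care is the handling of slices where $f(\cdot,\cdot,c)$ vanishes identically or drops degree: one should note that this happens for $O_{\deg f}(1)$ values of $c$ (write $f = \sum_k f_k(x,y) z^k$; a slice is degenerate only if it is a common zero of all the $f_k$, of which there are finitely many since not all $f_k$ are zero), and that discarding those slices changes the counts by at most $O(|C|)$, which is absorbed into the $-|C|$ error term. A second minor point is that Corollary~\ref{smallGapPoly}'s implicit constants depend on $\deg f(\cdot,\cdot,c) \le \deg f$, so they are uniform over the slices, and the final summation is clean.
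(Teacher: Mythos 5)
Your proof is correct and follows essentially the same route as the paper: restrict to the slices $c$ with $|G_c|\geq\frac{1}{2}|G|/|C|$ (the paper calls this set $C'$), note these carry at least half the mass of $G$, and apply Corollary~\ref{smallGapPoly} to each such slice with $g(x,y)=f(x,y,c)$; summing the per-slice counts and absorbing the $-1$ losses into the $-|C|$ term gives the claim. Your extra remark about slices where $f(\cdot,\cdot,c)$ vanishes identically is a minor point of care that the paper's terse proof leaves implicit, but it does not change the argument.
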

\begin{proof}
Let 
\[
C'=\Big\{c\in C\colon |(A\times B\times\{c\}) \cap G|\geq \frac{|G|}{2|C|}\Big\}.
\]
Then $|(A\times B\times C')\cap G|\geq|G|/2$.  For each $c\in C'$, apply Corollary \ref{smallGapPoly} to $A,B$, and $g(x,y)=f(x,y,c)$. 
\end{proof}

%%%
%%%

\section{From point-curve incidences to Schwartz-Zippel bounds}
In this section, we will fix a non-zero polynomial $f\in \RR[x,y,z]$ that does not satisfy Conclusions (B) or (C) from Theorem \ref{mainThm}. In what follows, all implicit constants are allowed to depend on the degree of $f$. 

For $a,a'\in \CC$, we define
\begin{equation}
\begin{split}
\alpha_{a,a'} & = \{(y,y',z)\in\CC^3\colon f(a,y,z) = 0,\ f(a',y',z)=0\},\\
\beta_{a,a'} &= \{(y,y')\in\CC^2\colon \exists\ z\in\CC\ \operatorname{s.t.}\ f(a,y,z)=f(a',y',z)=0 \}.
\end{split}
\end{equation}
$\alpha_{a,a'}$ is a (affine) variety, and $\beta_{a,a'}$ is the projection of $\alpha$ to the first two coordinates; hence it is a constructible set. We will denote its Zariski closure by $\gamma_{a,a'}=\overline{\beta}_{a,a'}$. Similarly, for $b,b'\in \CC$, we define $\gamma^*_{b,b'}$ to be the Zariski closure of the set
\[
\{(x,x')\in\CC^2\colon \exists\ z\in\CC\ \operatorname{s.t.}\ f(x,b,z)=f(x',b',z)=0 \}.
\]

We next recall Lemma 2.3 from \cite{RSZ}. In what follows, we make crucial use of the assumption that $f$ does not satisfy Conclusions (B) or (C) from Theorem \ref{mainThm}. 

\begin{lem}
For all pairs $(a,a')\in\CC^2$ outside a ``bad'' set of size $O(1)$, the variety $\gamma_{a,a'}$ is either empty or is a curve of degree $O(1)$. An analogous result holds for the curves $\gamma^*_{b,b'}$.
\end{lem}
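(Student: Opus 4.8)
The plan is to follow \cite{RSZ}: realise $\gamma_{a,a'}$ as the Zariski closure of a coordinate projection of the affine curve $\alpha_{a,a'}\subset\CC^3$, and bound its dimension and degree by a Bézout argument after deleting a finite ``bad'' set of values of $a$ and $a'$. The only consequence of excluding Conclusions (B) and (C) that the argument uses is this: since, after every permutation of the coordinates, $f$ is not of the form $g(x,y)$, the irreducible polynomial $f$ genuinely involves each of $x,y,z$.

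First I would fix the bad set $E\subset\CC$ to be the union of the following three sets of values of $a$: (i) those for which $f(a,y,z)$ has $y$-degree strictly smaller than $\deg_y f$; (ii) those for which $f(a,y,z)$, viewed as a polynomial in $y$ over $\CC[z]$, has nonconstant content, i.e.\ a nonconstant factor lying in $\CC[z]$; and (iii) those for which $f(a,y,z)$ has a factor $y-y_0$, equivalently $f(a,y_0,z)\equiv 0$ for some $y_0\in\CC$. Each of these is a proper Zariski-closed subset of the affine line, hence finite with $O(1)$ elements: (i) is clear; for (ii) the content of $f$ over $\CC[x,z]$ must be a unit (otherwise it divides $f$, contradicting irreducibility), so it can vanish only on a proper closed set upon specialising $x\mapsto a$; and for (iii), if infinitely many $a$ admitted such a $y_0$, then $Z(f)$ would contain a surface $W\times\CC_z$ for some curve $W\subset\CC^2$, and by irreducibility $Z(f)=W\times\CC_z$, so $f$ would not involve $z$. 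Declare a pair $(a,a')$ bad if $a\in E$ or $a'\in E$; the set of bad pairs then has size $O(1)$ depending only on $\deg f$.

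Now fix a pair $(a,a')$ that is not bad. Then $C_a:=\{(y,z)\colon f(a,y,z)=0\}$ is a plane curve of degree $\leq\deg f$, and because $f(a,y,z)$ attains full $y$-degree $\geq 1$ and has no factor in $\CC[z]$, for a generic value of $z$ it is a nonconstant univariate polynomial in $y$ with at most $\deg f$ roots; the analogous statement holds for $f(a',y',z)$. Hence over a generic value of $z$ the fibre of the projection $\alpha_{a,a'}\to\CC$, $(y,y',z)\mapsto z$, is the product of two finite sets, so $\dim\alpha_{a,a'}\leq 1$; on the other hand $\alpha_{a,a'}$ is cut out in $\CC^3$ by two hypersurfaces of degree $\leq\deg f$, so each of its irreducible components has dimension $\geq 1$. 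Therefore $\alpha_{a,a'}$ is empty or a pure one-dimensional variety, and Bézout's theorem gives $\deg\alpha_{a,a'}\leq(\deg f)^2$. Moreover, the coordinate projection $\pi\colon(y,y',z)\mapsto(y,y')$ collapses no component of $\alpha_{a,a'}$ to a point, for such a component would be a line $\{y=y_0,\ y'=y_0'\}$ forcing $f(a,y_0,z)\equiv 0$, contradicting (iii). Since the degree of the closure of the image of a curve under a linear projection is at most the degree of the curve, $\gamma_{a,a'}=\overline{\pi(\alpha_{a,a'})}$ is empty or a curve of degree at most $(\deg f)^2=O(1)$. The claim for $\gamma^*_{b,b'}$ follows in exactly the same way after interchanging the roles of $x$ and $y$, using that $f$ also involves $x$.

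I expect the main obstacle to be the bookkeeping in the second paragraph: checking that each of the three exceptional loci really is a proper subset of the line, and packaging this into the uniform $O(1)$ bound on $|E|$. This is precisely where irreducibility of $f$ and the exclusion of the degenerate form (B) are genuinely used; the arguments themselves are routine ``generic specialisation'' statements (a property that holds for $f$ identically over $\CC(x)$ fails after setting $x=a$ only on a proper closed set), but they must be set up with some care. Once the bad set is in hand, the dimension count, the Bézout degree bound, and the non-collapse of $\pi$ are all immediate.
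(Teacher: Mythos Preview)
The paper does not prove this lemma at all; it simply recalls it as Lemma~2.3 of \cite{RSZ}. Your reconstruction follows the right outline, but there is one genuine error. You correctly argue that the set $E\subset\CC$ of exceptional first coordinates is finite, and then write: ``Declare a pair $(a,a')$ bad if $a\in E$ or $a'\in E$; the set of bad pairs then has size $O(1)$.'' This is false: if $E\neq\emptyset$ then your bad set is $(E\times\CC)\cup(\CC\times E)$, a union of complex lines in $\CC^2$, hence infinite. The lemma as stated demands a finite bad set of pairs, and your third paragraph genuinely uses that \emph{both} $a\notin E$ and $a'\notin E$, so you cannot simply reinterpret ``or'' as ``and'' without further work.

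The repair is to locate the bad pairs more precisely rather than throwing away every pair touching $E$. Since $f(a,y,z)\in\CC[y,z]$ and $f(a',y',z)\in\CC[y',z]$ share only the variable $z$, any common irreducible factor in $\CC[y,y',z]$ must lie in $\CC[z]$; hence $\alpha_{a,a'}$ has a two-dimensional component if and only if there exists $z_0$ with $f(a,y,z_0)\equiv 0$ \emph{and} $f(a',y',z_0)\equiv 0$. Writing $f=\sum_i c_i(x,z)y^i$, irreducibility of $f$ forces $\gcd_i c_i=1$ in $\CC[x,z]$, so the locus $F=\{(x,z):c_i(x,z)=0\ \forall i\}$ is zero-dimensional, of size $O_{\deg f}(1)$. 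The pairs for which $\dim\alpha_{a,a'}\geq 2$ are then contained in $\bigcup_{z_0}\{a:(a,z_0)\in F\}\times\{a':(a',z_0)\in F\}\subset\pi_1(F)\times\pi_1(F)$, which \emph{is} a finite set of size $O(1)$. For every remaining pair, $\alpha_{a,a'}$ is cut out by two coprime equations and hence has pure dimension one and degree at most $(\deg f)^2$ by B\'ezout; its projection $\gamma_{a,a'}$ is then empty or a curve of degree $O(1)$ as required, without needing your conditions (i) and (iii) separately for each of $a$ and $a'$.
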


Following the arguments in \cite{RSZ}, we will use results from incidence geometry to control incidences between points $p\in B\times B$ and certain curves $\alpha_{a,a'}$ with $a,a'\in A$. One difficulty is that distinct pairs $(a,a')$ might lead to curves $\gamma_{a,a'}$ that contain a common irreducible component. Our next task is to show that this situation cannot occur frequently. We recall Definition 2.4 and Proposition 2.5 from \cite{RSZ}.

\begin{defn}
We say a curve $\gamma\subset\CC^2$ is \emph{popular} if there are at least $(\deg f)^4+1$ points $(a,a')\in\CC^2$ so that $\gamma\subset \gamma_{a,a'}$. We say a pair $(a,a')\in\CC^2$ is \emph{dangerous} if $\gamma_{a,a'}$ contains a popular curve. Otherwise, it is \emph{safe}. We define dangerous and safe pairs $(b,b')\in\CC^2$ analogously. 
\end{defn}

\begin{lem}
The set of dangerous points in $A\times A$ is contained in a curve $\zeta\subset\CC^2$ of degree $O(1)$. Similarly, the set of dangerous points in $B\times B$ is contained in a curve $\zeta^*\subset\CC^2$ of degree $O(1)$.
\end{lem}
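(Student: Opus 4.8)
The plan is to show that every dangerous point in $A \times A$ lies on a curve of bounded degree, by exhibiting a single nonzero polynomial $P$ of degree $O(1)$ that vanishes on all dangerous pairs. The strategy is to first bound the number of popular curves, then encode the dangerousness condition as an algebraic one.

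\medskip

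\textbf{Step 1: Bounding the popular curves.} First I would argue that there are only $O(1)$ popular curves. Each popular curve $\gamma$ is contained in $\gamma_{a,a'}$ for at least $(\deg f)^4 + 1$ pairs $(a,a')$. By the preceding lemma, for a fixed $(a,a')$ outside an $O(1)$-size bad set, $\gamma_{a,a'}$ is a curve of degree $O(1)$, hence has at most $O(1)$ irreducible components. The key observation is a ``double counting'' / parameter-counting argument: the family of curves $\{\gamma_{a,a'}\}_{(a,a')\in\CC^2}$ is a constructible family parametrized by a $2$-dimensional variety, and each curve has degree $O(1)$, so by a standard argument (e.g.\ the Zariski-closure / generic-behavior argument in \cite{RSZ}, or a direct dimension count on the incidence variety $\{(\gamma, a, a') : \gamma \subset \gamma_{a,a'}\}$) a component $\gamma$ that is ``shared'' by more than $(\deg f)^4$ pairs must come from a one-dimensional subfamily, and there can be at most $O(1)$ such subfamilies. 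This is exactly the content already isolated in \cite{RSZ}, so I would simply invoke it: there are $O(1)$ popular curves $\gamma^{(1)}, \ldots, \gamma^{(m)}$ with $m = O(1)$, each of degree $O(1)$.

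\medskip

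\textbf{Step 2: Encoding ``dangerous'' algebraically.} A pair $(a,a')$ is dangerous iff $\gamma_{a,a'}$ contains one of the finitely many popular curves $\gamma^{(j)}$. Fix one popular curve $\gamma^{(j)}$; I claim the set $D_j = \{(a,a') : \gamma^{(j)} \subseteq \gamma_{a,a'}\}$ is contained in a curve of degree $O(1)$ (unless $D_j$ is all of $\CC^2$, which we rule out below). To see this, pick $\deg\gamma^{(j)} + 1$ generic points $q_1, \ldots, q_r \in \gamma^{(j)}$ (with $r = O(1)$); then $\gamma^{(j)} \subseteq \gamma_{a,a'}$ essentially forces each $q_\ell = (y_\ell, y'_\ell)$ to lie in $\beta_{a,a'}$, i.e.\ there exists $z$ with $f(a, y_\ell, z) = f(a', y'_\ell, z) = 0$. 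Eliminating $z$ via a resultant, $\mathrm{Res}_z(f(a,y_\ell, z), f(a', y'_\ell, z)) = 0$ is a polynomial condition on $(a,a')$ of degree $O(1)$; call it $R_\ell(a,a') = 0$. Then $D_j \subseteq \bigcap_\ell Z(R_\ell)$. If some $R_\ell$ is not identically zero, we get $D_j$ inside a curve of degree $O(1)$ and we are done for this $j$; taking the product over all $j$ and over all the relevant $R_\ell$ yields the single polynomial $\zeta$. Finally we set $\zeta^*$ by the symmetric construction with the roles of the first/third and second/third variables appropriately swapped.

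\medskip

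\textbf{Step 3: Ruling out the degenerate case.} The remaining obstacle — and I expect this to be the main point requiring care — is to rule out the possibility that $D_j = \CC^2$ for some popular curve, i.e.\ that $\gamma^{(j)} \subseteq \gamma_{a,a'}$ for \emph{all} $(a,a')$, or equivalently that all the resultants $R_\ell$ vanish identically. If that happened, then for generic $(a,a')$ the curve $\gamma_{a,a'}$ would contain a fixed curve $\gamma^{(j)}$ independent of $(a,a')$; unwinding the definition of $\gamma_{a,a'}$, this says that for a fixed pair $(y_0, y_0')$ and for generic $a, a'$, there is a $z$ with $f(a, y_0, z) = f(a', y_0', z) = 0$. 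But for fixed $y_0$, the equation $f(a, y_0, z) = 0$ cuts out a curve in the $(a,z)$-plane, so only $O(1)$ values of $a$ admit any such $z$ unless $f(x, y_0, z)$ is identically zero as a polynomial in $x, z$ — which, since $f$ is irreducible of positive degree in $x$, happens for at most finitely many $y_0$. This contradicts genericity, so no $D_j$ is all of $\CC^2$. (This is precisely where one uses that $f$ genuinely depends on all three variables, i.e.\ that Conclusion (B) fails; the failure of (C) was already used in the preceding lemma to guarantee the $\gamma_{a,a'}$ are curves rather than surfaces.) Combining, $\zeta = \prod_{j,\ell} R_{j,\ell}$ is a nonzero polynomial of degree $O(1)$ vanishing on every dangerous point of $A \times A$, and symmetrically for $\zeta^*$.
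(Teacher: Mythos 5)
The paper does not prove this lemma itself; it is a restatement of Proposition~2.5 of~\cite{RSZ}, and the paper simply cites that result. Your overall resultant-elimination strategy is in the spirit of~\cite{RSZ}, but there are two problems, one small and one genuine.

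The small issue is in Step~2: $\gamma^{(j)} \subseteq \gamma_{a,a'}$ gives $q_\ell \in \gamma_{a,a'}$, not $q_\ell \in \beta_{a,a'}$, since $\gamma_{a,a'}$ is the Zariski \emph{closure} of the constructible set $\beta_{a,a'}$ and may contain finitely many extra points. You hedge with ``essentially,'' but the set of points added in the closure depends on $(a,a')$, so you cannot guarantee a fixed $q_\ell$ lands in $\beta_{a,a'}$. This is fixable: by constructibility in families $|\gamma_{a,a'}\setminus\beta_{a,a'}| = O(1)$ uniformly, so choosing enough $q_\ell$'s ensures at least one lies in $\beta_{a,a'}$ for each dangerous $(a,a')$, and one then takes $D_j \subseteq \bigcup_\ell Z(R_\ell)$ (a union, not the intersection you wrote) and multiplies the $R_\ell$ together. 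The degree stays $O(1)$.

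The genuine gap is Step~3. You assert that ``for fixed $y_0$, the equation $f(a,y_0,z)=0$ cuts out a curve in the $(a,z)$-plane, so only $O(1)$ values of $a$ admit any such $z$.'' That inference is false: a curve in the $(a,z)$-plane projects to a cofinite (often all) subset of the $a$-axis, so generically \emph{every} $a$ admits a $z$ with $f(a,y_0,z)=0$, and likewise for $a'$. The nontrivial content of the degenerate case is whether, for generic $(a,a')$, those two $z$-root sets share a common element, i.e.\ whether $\operatorname{Res}_z\big(f(a,y_0,z), f(a',y_0',z)\big)$ vanishes identically in $(a,a')$. Since the two resultant arguments are decoupled in $a$ and $a'$, identical vanishing forces a $z_0$ with $f(x,y_0,z_0)\equiv 0$ as a polynomial in $x$ (a line $\CC\times\{(y_0,z_0)\}\subset Z(f)$), and similarly for $y_0'$. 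Ruling this out, or showing it happens for only finitely many $y_0$ (so that a generic $q_\ell$ on $\gamma^{(j)}$ avoids them, using the failure of Conclusion~(B) to prevent $Z(f)$ from being a cylinder over a curve in $(y,z)$), is the actual crux, and your argument does not supply it. As written, Step~3 would ``prove'' too much and does not in fact exclude the degenerate case.
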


\begin{cor}\label{smallForbiddenSet}
For each $a\in A$ there is a set $\operatorname{forbid}(a)\subset A$ of size $O(1)$, so that if $(a,a')\in A^2$ with $a'\not\in\forbid(a)$, then $(a,a')$ is safe. An analogous result holds for pairs $(b,b')\in B^2$. 
\end{cor}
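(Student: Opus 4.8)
The plan is to read the corollary off the previous lemma by slicing the exceptional curve $\zeta\subset\CC^2$ (and its analogue $\zeta^*$) with axis-parallel lines and applying B\'ezout's theorem. Fix $a\in A$. By the previous lemma every dangerous pair in $A\times A$ lies on $\zeta$, a curve of degree $O(1)$, so the dangerous pairs with first coordinate $a$ are contained in $\zeta\cap(\{a\}\times\CC)$. If the vertical line $\{a\}\times\CC$ is not a component of $\zeta$, then B\'ezout's theorem bounds this intersection by $\deg\zeta=O(1)$; I then set $\operatorname{forbid}(a)=\{a'\in A:(a,a')\in\zeta\}$, enlarged by the point $a$ itself if $(a,a)$ happens to be dangerous, which still has size $O(1)$. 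Any $a'\notin\operatorname{forbid}(a)$ then gives a pair $(a,a')\notin\zeta$, which is therefore not dangerous, i.e.\ safe. The argument for pairs $(b,b')\in B^2$ is identical, slicing $\zeta^*$ with the line $\{b\}\times\CC$.

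It remains to handle the at most $\deg\zeta=O(1)$ values $a_0\in A$ for which $\{a_0\}\times\CC$ is an irreducible component of $\zeta$ (and the analogous values for $\zeta^*$), since for those B\'ezout gives nothing. For such an $a_0$ I would argue directly that $\{a'\in\CC:(a_0,a')\text{ is dangerous}\}$ is still finite of bounded size. A pair is dangerous only if $\gamma_{a,a'}$ contains one of the boundedly many popular curves (there are only $O(1)$ of them, as is implicit in the proof of the previous lemma), so it is enough to show that for each fixed popular curve $\gamma$ there are only $O(1)$ values $a'$ with $\gamma\subseteq\gamma_{a_0,a'}$. Unwinding the definition of $\gamma_{a_0,a'}$, membership of a generic point $(y,y')\in\gamma$ forces the existence of a root $z$ of the polynomial $f(a_0,y,\,\cdot\,)$ — which is not identically zero for generic $y$, since $f$ depends on its third variable as Conclusion (B) fails — with $f(a',y',z)=0$; and since $f$ also depends on its first variable, the polynomial $f(\,\cdot\,,y',z)$ is not identically zero for a generic such triple $(y,y',z)$, hence has $O(1)$ roots $a'$. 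Summing over the $O(1)$ popular curves bounds $|\operatorname{forbid}(a_0)|$ by $O(1)$.

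I expect this second step — the removal of axis-parallel components of $\zeta$ and $\zeta^*$ — to be the only real obstacle; everything else is a one-line application of B\'ezout's theorem to the bounded-degree curve produced by the previous lemma. The point requiring genuine care is excluding the possibility that the relevant specialization $f(\,\cdot\,,y',z)$ collapses to zero along an entire branch of $\gamma$, which would reintroduce a full line of dangerous pairs; this has to be ruled out using the failure of Conclusions (B) and (C) for $f$, in the same spirit as the bad-set estimates recalled just above.
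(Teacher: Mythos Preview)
Your main argument---slice the bounded-degree curve $\zeta$ by the vertical line $\{a\}\times\CC$ and apply B\'ezout---is exactly the one-line deduction the paper has in mind; the corollary is stated without proof precisely because this is all that is intended.

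You are right to flag the edge case where $\{a_0\}\times\CC$ is itself a component of $\zeta$, and the paper does not address it explicitly. However, your proposed fix has a gap. You assert that there are only $O(1)$ popular curves, calling this ``implicit in the proof of the previous lemma,'' but nothing in the definition (a curve is popular if it lies in $\gamma_{a,a'}$ for more than $(\deg f)^4$ pairs) immediately bounds how many such curves there are, and the statement of the lemma you are invoking only produces the curve $\zeta$, not a finite list of popular curves. Without that bound, your sum over popular curves does not close. Your second ingredient---that for a fixed popular $\gamma$ and fixed $a_0$ there are only $O(1)$ choices of $a'$ with $\gamma\subset\gamma_{a_0,a'}$---is plausible, but the genericity sketch you give (pick a generic $(y,y')\in\gamma$, find a root $z$ of $f(a_0,y,\cdot)$, then bound the roots of $f(\cdot,y',z)$) only shows that a \emph{generic} point of $\gamma$ constrains $a'$; you still need a short argument (e.g.\ pigeonhole over the $\leq\deg f$ roots $z$ to force $f(\cdot,y',z)\equiv 0$ along a branch, then invoke failure of Conclusion~(B)) to pass from ``generic point'' to ``the whole curve.''

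A cleaner way to dispose of the edge case, and probably what the authors have in mind, is simply to note that $\zeta$ has at most $\deg\zeta=O(1)$ vertical-line components, so there are at most $O(1)$ exceptional values $a_0\in A$. For the purposes of the downstream argument (Lemma~\ref{manySafe5Tupples} and the incidence bound), one can delete these $O(1)$ values from $A$ at the outset: since $f$ is irreducible and does not satisfy Conclusion~(B), the slice $f(a_0,\cdot,\cdot)$ is a nonzero bivariate polynomial, so each deleted $a_0$ removes at most $(\deg f)\max(|B|,|C|)=O(|C|)$ points from $G$, which is absorbed by the right-hand side of \eqref{bdABC}. This sidesteps any need to count popular curves.
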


Combining Lemma \ref{proximity5Tuples} and Corollary \ref{smallForbiddenSet}, we obtain the following.
\begin{lem}\label{manySafe5Tupples}
There is a constant $K = O(1)$ so that the following holds. Let $G = (A\times B\times C)\cap Z(f)$. Then there are $\gtrsim |G|$ tuples $(a,a',b,b',c)$ with the following properties.
\begin{itemize}
	\item $(a,b,c),\ (a',b',c)\in G$.
	\item $(a,a')$ and $(b,b')$ are safe.
	\item $|\ind(a)-\ind(a')|\leq K |A||C|/|G|$.
	\item $|\ind(b)-\ind(b')|\leq K |B||C|/|G|$.
\end{itemize}
\end{lem}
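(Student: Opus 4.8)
The plan is to obtain Lemma~\ref{manySafe5Tupples} by feeding the $O(1)$-sized forbidden sets produced by Corollary~\ref{smallForbiddenSet} into Lemma~\ref{proximity5Tuples}, with the parameter $S$ there chosen to be a constant. Concretely, first let $K_1 = O(1)$ be an upper bound, depending only on $\deg f$, for the sizes of all the sets $\forbid(a)$, $a\in A$, and $\forbid(b)$, $b\in B$, furnished by Corollary~\ref{smallForbiddenSet}, and set $S := K_1 + 1$. Then the hypotheses of Lemma~\ref{proximity5Tuples} hold with this value of $S$ and these forbidden sets, so with $G = (A\times B\times C)\cap Z(f)$ we obtain $\gtrsim |G|/S - |C|$ tuples $(a,a',b,b',c)$ with $(a,b,c),(a',b',c)\in G$, $a'\notin\forbid(a)$, $b'\notin\forbid(b)$, $|\ind(a)-\ind(a')|\lesssim S|A||C|/|G|$, and $|\ind(b)-\ind(b')|\lesssim S|B||C|/|G|$.

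Next I would check that each such tuple already has the four properties claimed in Lemma~\ref{manySafe5Tupples}. The membership condition $(a,b,c),(a',b',c)\in G$ is immediate. Since $a'\notin\forbid(a)$, Corollary~\ref{smallForbiddenSet} tells us that $(a,a')$ is safe, and likewise $b'\notin\forbid(b)$ gives that $(b,b')$ is safe. Finally, because $S$ is a constant, the two index estimates read $|\ind(a)-\ind(a')|\leq K|A||C|/|G|$ and $|\ind(b)-\ind(b')|\leq K|B||C|/|G|$, where $K := S\,K_2 = O(1)$ and $K_2$ is the implied constant in the conclusion of Lemma~\ref{proximity5Tuples}. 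Thus no modification of the tuples is needed: the collection produced by Lemma~\ref{proximity5Tuples} is exactly a collection of the desired kind, and its cardinality is $\gtrsim |G|/S - |C|$.

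The one point that genuinely requires care, and which I expect to be the main obstacle, is converting the bound $\gtrsim |G|/S - |C|$ into the clean bound $\gtrsim |G|$: since $S = O(1)$, the quantity $|G|/S - |C|$ is $\gtrsim |G|$ exactly when $|G|$ dominates $|C|$, say $|G| \geq C_0 |C|$ for a suitably large $C_0 = O(1)$, in which case $|G|/S - |C| \geq |G|/S - |G|/(2S) = |G|/(2S) \gtrsim |G|$. I would handle this either by including the hypothesis $|G| \geq C_0|C|$ --- this is the only regime in which Lemma~\ref{manySafe5Tupples} is applied in the proof of Theorem~\ref{mainThm}, since outside it the bound~\eqref{bdABC} is easy --- or by retaining the slightly weaker conclusion ``$\gtrsim |G| - |C|$ tuples'', which is still sufficient for the downstream incidence argument. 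With that caveat in place, the constant $K$ above completes the proof.
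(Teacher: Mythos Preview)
Your proposal is correct and follows exactly the paper's approach: the paper simply states that Lemma~\ref{manySafe5Tupples} follows by combining Lemma~\ref{proximity5Tuples} with the $O(1)$-sized forbidden sets from Corollary~\ref{smallForbiddenSet}, which is precisely what you do. Your discussion of the $|G|$ versus $|G|-|C|$ issue is in fact more careful than the paper, which glosses over this point entirely (and indeed, as you note, the regime $|G|\lesssim |C|$ is trivially handled by~\eqref{bdABC}).
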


Next, with $K$ as above we define
\begin{equation}
\begin{split}
\Gamma & = \{\gamma_{a,a'}\colon a\in A,\ a'\in A\backslash \forbid(a),\ |\ind(a)-\ind(a')|\leq K|A||C|/|G|\},\\
P & = \{(b,b') \colon b\in B,\ b'\in B\backslash \forbid(b),\ |\ind(b)-\ind(b')|\leq K|B||C|/|G|\}.
\end{split}
\end{equation}
$\Gamma$ is a collection of bounded degree curves in $\CC^2$, and $P$ is a subset of $\RR^2$, which we identify with a subset of $\CC^2$. We have 
\begin{equation}\label{sizeGammaAndP}
|\Gamma|\lesssim |A|^2|C|/|G|,\quad |P|\lesssim |B|^2|C|/|G|.
\end{equation}
For each pair of curves $\gamma,\gamma'\in\Gamma$, we have $|\gamma\cap\gamma'|\lesssim 1$ (this precisely the statement that each curve $\gamma_{a,a'}\in\Gamma$ is constructed from a safe pair $(a,a')$ ). Similarly, for each pair of points $p,p'\in P$, there are $\lesssim 1$ curves in $\Gamma$ that contain both $p$ and $p'$ (this precisely the statement that each pair $(b,b')\in P$ is safe). For such an arrangement of points and curves, we have the following Szemer\'edi-Trotter type bound:
\begin{equation}\label{incidenceBound}
I(P,\Gamma)\lesssim |P|^{2/3}|\Gamma|^{2/3}+|P|+|\Gamma| \lesssim \Big(\frac{|A||B||C|}{|G|}\Big)^{4/3} + \frac{|B|^2|C|}{|G|},
\end{equation}
where the final inequality used the assumption that $|A|\leq |B|$. Note that while $\Gamma$ is a set of complex plane curves, the coordinates of the points in $P$ are real, and thus we may restrict each plane curve $\gamma\in\Gamma$ to its real locus $\mathcal{R}(\gamma)$. Thus Inequality \eqref{incidenceBound} is an incidence bound for sets of points and algebraic curves in $\RR^2$. Bounds of this form go back to the pioneering work of CEGSW \cite{CEGSW}; see also \cite{PS}.

On the other hand, each tuple from Lemma \ref{manySafe5Tupples} contributes an incidence to $I(P,\Gamma)$. We conclude that
\begin{equation}
|G|\lesssim I(P,\Gamma)\lesssim \Big(\frac{|A||B||C|}{|G|}\Big)^{4/3} + \frac{|B|^2|C|}{|G|}.
\end{equation}
rearranging, we obtain \eqref{bdABC}.

\section{Acknowledgements}
Research of JS was supported in part by a Hungarian National Research Grant KKP grant no. 133819, by an NSERC Discovery grant and OTKA K grant
no.119528. Research of JZ was supported by an NSERC Discovery grant.

\end{document}